\newtheorem{observation}{Observation}[section]
\newtheorem{conjecture}{Conjecture}[section]
\newcommand{\diam}{{\rm{diam}}}
\begin{document}

\setcounter{page}{47}
\publyear{22}
\papernumber{2165}
\volume{190}
\issue{1}

\finalVersionForARXIV

\title{Diameter of General Kn\"odel Graphs}


\author{Seyed Reza Musawi\thanks{Address for correspondence: Faculty of Mathematical Sciences,
                              Shahrood University of Technology,  P.O. Box 36199-95161, Shahrood, Iran}
       \\
Faculty of Mathematical Sciences \\
Shahrood University of Technology \\
P.O. Box 36199-95161, Shahrood, Iran\\
r\_musawi@shahroodut.ac.ir
\and Esmaeil Nazari\\
Department of Mathematics \\
Tafresh University \\
Tafresh, Iran\\
nazari.esmaeil@gmail.com}

\maketitle

\runninghead{S.R. Musawi and E. Nazari}{Diameter of General Kn\"odel Graphs}

\begin{abstract}
The Kn\"odel graph $W_{\Delta,n}$ is a $\Delta$-regular bipartite graph on $n\geqslant 2^{\Delta}$ vertices where $n$ is an even integer.   In this paper We obtain some results about the distances of two vertices in the Kn\"odel graphs and by them, we prove that $\diam(W_{\Delta,n})=1+\lceil\frac{n-2}{2^{\Delta}-2}\rceil$, where $\Delta\geqslant2$ and $n\geqslant (2\Delta-5)(2^{\Delta}-2)+4$.\vspace*{-4mm}
\end{abstract}

\begin{keywords}
Kn\"odel Graph, distance, diameter
\end{keywords}

\section{Introduction}
In this paper, all graphs are simple and finite. A simple and finite graph $G=(V,E)$ consists of two finite sets, $V\ne\emptyset$ is the set of its vertices and $E$ is a set of some two-elements subset of $V$. If $E\ne\emptyset$, then each element of  $E$ is called an edge of $G$. We denoted the edge $\{x,y\}$ by $xy$ and we call $x$ and $y$ as the end points of $xy$. Two vertices are called adjacent if they are the end points of an edge. The set of all vertices adjacent to a vertex $x$ is denoted by $N(x)$.
A graph is bipartite if its vertex set can be partitioned into two subsets so that every edge has one end points in each of them.
 A walk in a  simple graph is a sequence $x_0x_1x_2\cdots x_{\ell}$, whose terms are the vertices of the graph such that each two consecutive vertices are adjacent. We say that the walk $x_0x_1x_2\cdots x_{\ell}$ connects $x_0$ to $x_{\ell}$ and refer to it as $x_0x_{\ell}$-walk and the number $\ell$ is called the length of the walk.
A path in a graph is a walk with distinct vertices in it. Given two vertices $x$ and $y$, the distance between them, denoted by $d(x,y)$, is the length of the shortest $xy$-path.
  The diameter of a graph $G$, $\diam(G)$, is the greatest distance between two vertices of $G$. For more terminology, we refer the reader to \cite{bm}.

Two intrested and exciting concepts in communication networks are gossiping and broadcasting problems.  In broadcasting problems, a person has some informations that have to be communicated to others, while in gossiping problems, each person in the network knows a part of the subject and wants to communicate it to others.
In gossiping, if two people can talk to each other, for example, through a telephone conversation, they will pass all their information to each other. For this purpose, various questions have been raised and examined: A person may not be able to communicate with everyone, Multi-person conversations may occur and etc. A gossiping is complete when everyone knows the complete information.

\medskip
Let $f(n)$ be the minimum number of calls necessary to complete a gossiping among $n$ people where any pair of people may call each other (complete graph), it has been proven by various methods that
$f(1)=0, f(2)=1, f(3)=3, f(n)=2(n-2)$, for $n\geqslant4$.

When the communication graph be a tree, Harary and Schwenk obtained that $f(n)=2n-3$ for $n\geqslant2$, and so for any connected communication graph we have $2n-4\leqslant f(n)\leqslant 2n-3$ for $n\geqslant2$.\cite{hhl}

\medskip
For a graph $G$, the minimum number of time units necessary to complete a gossiping (2-party) is denoted by $T(G)$. If $P_n$ be the path of length $n$, then
$T(P_n)=\left\{
\begin{array}{cc}n-1&\text{for} \;n\; \text{even}\\
n&\text{for} \;n\; \text{odd}
\end{array} \right.$
and $T(G_{m,n})$ is equal to the diameter of $G_{m,n}$ (except $G_{3,3}$), where $G_{m,n}$ is the grid graph.

\medskip
In 1977, Slater raised a new question. What is the minimum number of time units to transfer one person's information to the rest of the group? This simple question became the basis for extensive research into the theory and technology of broadcasting in communication, information and computer networks.
Broadcasting starts from one person and we say that is completed when all people are informed.

Consider a connected graph $G$ and assume that the vertex $u$ is the message initiator. The minimum time required to complete the transmission of information from $u$ is denoted by $b(u)$ and it is called \textit{broadcast time of vertex $u$}.
Easily, we see that $b(u)\geqslant\lceil\log_2n\rceil$.
The broadcast center of a Graph $G$ is the set $BC(G)$ consisting all vertices $u$ such that $b(u)=\min\{b(v):v\in V(G)\}$.

In 1981, Slater et al. proved that the broadcast center of a tree consists of a star with at least two vertices. Specialy, the broadcast center of a star graph contains all of the vertices of it.\cite{sch}
  Also, the broadcast time of a graph G is defined by $b(G)=\max\{b(u):u\in V(G)\}$.
    For the complete graph $K_n$ with $n\geqslant2$ vertices, we have $b(K_n) =\lceil \log_2n\rceil$.

  We define a minimal broadcast graph to be a graph $G$ with $n$ vertices such that $b(G) = \lceil \log_2n\rceil$, but for
every edge $e\in E(G)$, $b(G-e)>b(G)$. For example, the cycle graph $C_4$ is a minimal broadcast graph with 4 vertices.

 The broadcast function $B(n)$ is defined as the minimum number of edges in any minimal broadcast graph on $n$ vertices. A minimum broadcast graph is a minimal broadcast graph on $n$ vertices having $B(n)$ edges. From an applications perspective, minimum broadcast graphs represent the cheapest possible communication networks (having the fewest communication lines) in which broadcasting can be accomplished, from any vertex, as fast as theoretically possible.
    The results of some studies suggest that minimum broadcast graphs are extremely difficult to find.

An important family of graphs in graph theory is the Kn\"odel graph introduced in 1975 by Walter Kn\"odel \cite{k}.
Indeed, he provided a solution to this problem: \textit{There are n people, and each of them knows a part of an event. They want to communicate their information to others through two-person conversations. Each conversation lasts for a certain period of time and all the information of each person is transferred to the other person. How long will it take for everyone to know the whole story?}

The following definition of Kn\"odel graphs is extracted from Kn\"odel's proof  \cite{fp}:

\begin{definition}
Let $n$ be a positive even integer, and $\Delta$ be an integer satisfying $1\leqslant \Delta \leqslant \log_2n$, the \textbf{Kn\"odel graph} $W_{\Delta,n}$ is a bipartite regular simple graph on $n$ vertices with degree $\Delta$. The vertices of $W_{\Delta,n}$ are the pairs $(i,j)$ with $i=1,2$ and $0\leqslant j\leqslant n/2-1$. For every $j$, $0\leqslant j\leqslant n/2-1$, there is an edge between vertex $(1, j)$ and every vertex $(2,(j+2^k-1) \pmod {n/2})$, for $k=0,1,\cdots,\Delta-1$.
We say the vertices $(1, j)$ and $(2,(j+2^k-1) \pmod {n/2})$ are connected through dimension k.
\end{definition}


We will show the set of vertices $\{(1,0),(1,1),\cdots,(1,\frac{n}{2}-1)\}$ by
 $U=\{u_0,u_1,\cdots,u_{\frac{n}{2}-1}\}$ and the set of vertices $\{(2,0),(2,1),\cdots,(2,\frac{n}{2}-1)\}$ by
  $V=\{v_0,v_1,\cdots,v_{\frac{n}{2}-1}\}$. Then two vertices $u_i$ and $v_j$ are adjacent if and only if $j\in \{i+2^0-1,i+2^1-1,\cdots,i+2^{\Delta-1}-1\}$ (or $j-i\in  \{2^0-1,2^1-1,\cdots,2^{\Delta-1}-1\}$). Throughout the paper, $U\cup V$ is the vertex set of $W_{\Delta,n}$ and we will use two notations $s=2^{\Delta-1}-1$ and $\mathscr{M}_{\Delta}= \{2^0-1,2^1-1,\cdots,2^{\Delta-1}-1\}$. All calculations on the indices are taken modulo $n/2$. Hence $u_i=u_{i'}$ and $v_j=v_{j'}$ if and only if $i\equiv i'\pmod {n/2},$ and $j\equiv j' \pmod {n/2}$, respectively.

Each Kn\"odel graph is a Cayley graph \cite{hmp} and so is a vertex-transitive graph.
It is known that when $\Delta\ge2$, the Kn\"odel graphs can be defined as Cayley graphs on the semi-direct product
$\mathbb{Z}_2\ltimes\mathbb{Z}_{\frac{n}{2}}$
 with the multiplicative law $(x,y)(x',y')=(x+x',y+(-1)^xy')$, where $x,x'\in\mathbb{Z}_2$ and $y,y'\in\mathbb{Z}_{\frac{n}{2}}$, and with $S=\{(1,2^i-1):0\leqslant i\leqslant\Delta-1\}$ as the set of generators.

Here are two graph automorphisms that we will use later. If we want to map the vertex $u_i$ to the vertex $u_j$, we use the graph automorphism $\sigma$ such that $\sigma(u_k)=u_{j-i+k}$ and $\sigma(v_k)=v_{j-i+k}$ for $k=0,1,\cdots,\frac{n}{2}-1$. Also, if we want to map the vertex $u_i$ to the vertex $v_j$, we use the graph automorphism $\sigma'$ such that $\sigma'(u_k)=v_{i+j-k}$ and $\sigma'(v_k)=u_{i+j-k}$ for $k=0,1,\cdots,\frac{n}{2}-1$.

The Kn\"odel graph has a symmetric structure and good properties in terms of broadcasting and gossiping in interconnected networks. Among the well-known network architectures, the Kn\"odel graph can be considered a suitable candidate for the problem of information dissemination.

Kn\"odel graphs, hypercubs and recursive circulant graphs are 3 well-known network topologies in gossiping and broadcasting \cite{h}.
The interested reader can see \cite{hkmp,hl,hlpr}, for more information about gossiping and broadcasting.
The Kn\"odel graphs $W_{\Delta,2^{\Delta}}$ are minimal broacast graphs for all $\Delta\ge2$ and they are used as strong competitors for hypercubes \cite{b}.
Although good communication properties of $W_{\Delta,2^{\Delta}}$ or $W_{\Delta-1,2^{\Delta}-2}$ are well known, they were not studied for general Kn\"odel graph $W_{\Delta,n}$ \cite{fr}.
$W_{\Delta,2^{\Delta}}$ may be constructed recursively \cite{ah,bhlp}. For example,  by removing the edges $u_tv_t$, $0\leqslant t\leqslant 15$, in $W_{4,32}$, we have two copies of $W_{3,16}$ (See Figure \ref{fig1}).

\begin{figure}[h]\label{fig1}
\begin{center}
\begin{tikzpicture}[scale=.95]
\foreach \i in {0,1,2,...,15} \foreach \t in {1,3,7}{\draw (\i,2)--({\i+\t-16*int((\i+\t)/16)},0); \node at ({\i},2.3){$u_{\i}$};\node at ({\i},-.3){$v_{\i}$}; }
\foreach \i in {0,1,2,...,15}\draw[red,dashed,very thick] (\i,2)--(\i,0);

\foreach \i in {0,1,2,...,15} \foreach \j in {0,2}{\filldraw[fill=white] (\i,\j) circle(3pt); }
\end{tikzpicture}
\end{center}
\vspace{-5mm}
\caption{
$W_{4,32}$ can be constructed by two copies of $W_{3,16}$
}
\end{figure}

The diameter and the distance of vertices in graphs are important parameters. Previously, a number of authors have presented results on the diameter of the Kn\"odel graphs. In 2000, Fertin et al. in \cite{frssv} proved that $\diam(W_{\Delta,2^{\Delta}})=1+\lceil\frac{\Delta}{2}\rceil$. On the other hand, Gul Bahar Oad in \cite{o},  provided  some results about the number of vertices with a particular distance from a fixed vertex in the some special Kn\"odel graphs and an exact value for diameter of $W_{3,n}$, $\diam(W_{3,8})=3$ and $\diam(W_{3,n})=\lceil\frac{n-2}{6}\rceil+1$ where $n\geqslant 10$.
In order to explore the communication properties of Kn\"odel graph, Harutyunyan and Oad performed extensive simulations. The simulation results showed that the Kn\"odel graph has good communication properties. In particular, it has a small diameter and broadcast time. However, they were not able to find and to prove the closed form formulas for diameter, broadcast time and number of vertices from a particular distance. By some computational methods, they were only able to come up with some hypotheses for the diameter of the following classes of Kn\"odel graphs \cite{ho,o} (See Table \ref{table}).

\begin{table}[htbp]\small
\vspace*{-5mm}
\caption{Some special Kn\"odel graphs and their diameters.}
  \label{table}
  \begin{center}
\begin{tabular}{|c|c|c|}
\hline Kn\"odel Graph&Diameter&Tested Degree\\
\hline $W_{\Delta-1,2^{\Delta}-2}$&$\lceil(\Delta+2)/2\rceil$&$3\leqslant \Delta\leqslant 24$\\
\hline $W_{\Delta-1,2^{\Delta}}$&$\lceil(\Delta+2)/2\rceil$&$5\leqslant \Delta\leqslant 24$\\
\hline $W_{\Delta,2^{\Delta}+2}$&$\lfloor(\Delta+2)/2\rfloor$&$4\leqslant \Delta\leqslant 24$\\
\hline $W_{\Delta,2^{\Delta}+4}$&$\lceil(\Delta+2)/2\rceil$&$5\leqslant \Delta\leqslant 24$\\
\hline $W_{\Delta,2^{\Delta}+2^{\Delta-1}-2}$&$\lceil(\Delta+2)/2\rceil$&$3\leqslant \Delta\leqslant 24$\\
\hline
\end{tabular}
\end{center}
\end{table}

In the same years, Grigoryan and Harutyunyan presented an algorithm to find a short path between any two vertices in the Kn\"odel graphs. Then they proved the following theorem:
\begin{theorem}\label{the12}\emph{\cite[Theorem 6 ]{gh}}
For any $0<\epsilon<1$ there exists some $N(\epsilon)$ such that for all $n\geqslant N(\epsilon)$, $\Delta<\log n-(1+\epsilon)\log\log n$
and $i>\epsilon n$ we have $2\lfloor\frac{i}{2^{\Delta-1}-1}\rfloor+1\leqslant d(u_0,w)\leqslant 2\lfloor\frac{i}{2^{\Delta-1}-1}\rfloor+3$, where $w\in\{u_i,v_i\}$ and $2\lfloor\frac{\lfloor n/4\rfloor}{2^{\Delta-1}-1}\rfloor+1\leqslant \diam(W_{\Delta,n})\leqslant 2\lfloor\frac{\lfloor n/4\rfloor}{2^{\Delta-1}-1}\rfloor+3$.
\end{theorem}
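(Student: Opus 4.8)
The bipartite structure lets me encode every $u_0$--$w$ walk as an alternating sequence of index shifts: a step from $U$ to $V$ increases the index by some element of $\mathscr{M}_{\Delta}$ (hence by at most $s=2^{\Delta-1}-1$), while a step from $V$ to $U$ decreases it by some element of $\mathscr{M}_{\Delta}$. Thus a walk of length $\ell$ reaching index $i$ realizes a net displacement $\equiv i \pmod{n/2}$, assembled from $\lceil \ell/2\rceil$ ``up'' shifts and $\lfloor \ell/2\rfloor$ ``down'' shifts when $w=v_i$, and from $\ell/2$ of each when $w=u_i$. I would phrase the whole problem as: in how few such $\pm\mathscr{M}_{\Delta}$ moves, respecting this parity pattern, can one reach a residue $\equiv i$ modulo $n/2$? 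This reduction is the backbone of both bounds.

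For the upper bound I would build an explicit greedy path. Two consecutive steps of the form ``$+s$ then $-0$'' (the second legal because $2^0-1=0\in\mathscr{M}_{\Delta}$) advance the index by exactly $s$ while returning to $U$; iterating this $\lfloor i/s\rfloor$ times reaches $u_{\lfloor i/s\rfloor\, s}$ in $2\lfloor i/s\rfloor$ steps, and no wraparound occurs because $\Delta<\log n-(1+\epsilon)\log\log n$ forces $s\ll n$ and hence $\lfloor i/s\rfloor\, s<n/2$. It then remains to absorb the residue $i-\lfloor i/s\rfloor\, s\in[0,s)$ and to land on the correct side. Exploiting the binary expansion of the residue against the doubling structure $\{2^k-1\}$ of $\mathscr{M}_{\Delta}$, I expect this to cost at most three further steps, yielding $d(u_0,w)\le 2\lfloor i/s\rfloor+3$.

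For the lower bound I would argue by displacement. Since each step changes the index by at most $s$ in absolute value, a walk reaching $i$ by net forward motion needs at least $i/s$ up-shifts; combined with the parity constraint (odd length for $v_i$, even for $u_i$) this gives $\ell\ge 2\lfloor i/s\rfloor+1$. The genuine difficulty is ruling out a shorter walk that reaches $i$ through the residue $i-n/2$, i.e. by net \emph{backward} motion around the $n/2$-cycle: such a walk needs roughly $(n/2-i)/s$ down-shifts, so it is competitive only when $n/2-i$ is comparable to $i$. Here the hypothesis $i>\epsilon n$, together with $s\ll n$, is exactly what I would use to keep $2\lfloor i/s\rfloor$ the dominant term and to confine the boundary and divisibility anomalies to a lower-order $O(1)$ correction. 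I anticipate this wraparound analysis, and the careful pinning of the additive constant against the side of $w$, to be the main obstacle.

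Finally the diameter follows by vertex-transitivity, which gives $diam(W_{\Delta,n})=\max_{w}d(u_0,w)$. Because a residue $\equiv i$ can be reached either by net forward motion (cost governed by $i/s$) or by net backward motion around the $n/2$-cycle (cost governed by $(n/2-i)/s$), the displacement estimate behaves like $2\min(i,\,n/2-i)/s$ and is therefore largest near $i=\lfloor n/4\rfloor$. Evaluating the distance window there yields $2\lfloor\lfloor n/4\rfloor/s\rfloor+1\le diam(W_{\Delta,n})\le 2\lfloor\lfloor n/4\rfloor/s\rfloor+3$; for the lower half I would exhibit one explicit far vertex with index near $\lfloor n/4\rfloor$, chosen to avoid the divisibility exceptions, so that the pointwise lower bound applies to it directly.
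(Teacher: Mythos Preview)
This theorem is not proved in the present paper: it is quoted from Grigoryan and Harutyunyan \cite{gh1} as background, with no accompanying argument. There is therefore nothing here to compare your proposal against. The paper's own contribution is the exact formula $diam(W_{\Delta,n})=1+\lceil (n-2)/(2^{\Delta}-2)\rceil$ under the purely arithmetic hypothesis $n\ge(2\Delta-5)(2^{\Delta}-2)+4$, established via Lemmas~\ref{i=ks}--\ref{2k+1,2k} and Lemma~\ref{diameter}, not via the asymptotic $\epsilon$-framework of the cited result.

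That said, one point in your sketch deserves comment. Your upper bound via the greedy $(+s,-0)$ walk is sound and is essentially the mechanism behind Lemma~\ref{d=2i/s}. Your lower-bound paragraph, however, misreads the role of the hypothesis $i>\epsilon n$. You claim that $i>\epsilon n$ together with $s\ll n$ suffices to rule out the backward wraparound route of approximate length $2(n/2-i)/s$. But $i>\epsilon n$ imposes no upper bound on $i$: for any $\epsilon<1/2$ and large $n$ the index $i=n/2-1$ satisfies $i>\epsilon n$, yet $d(u_0,u_{n/2-1})=d(u_0,u_1)=2$ by Lemma~\ref{i,n/2-i}, while your claimed lower bound $2\lfloor i/s\rfloor+1$ is of order $n/s\to\infty$. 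So the hypothesis $i>\epsilon n$ cannot be what blocks wraparound; either the statement as transcribed here is missing an implicit restriction such as $i\le n/4$ (which is how the present paper normalizes indices), or the argument in \cite{gh1} uses the hypothesis for a different purpose than the one you assign to it. In either case your wraparound step, as written, does not go through.
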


Comparing our results with the results of Grigoryan and Harutyunyan in Theorem \ref{the12}, it seems that our results are better.
The first constraint in the Theorem \ref{the12} is the existence of $\epsilon>0$ on which the relationship between $n$ and $\Delta$ depends, while we have not such limitation. Also, the pairs $(\Delta,n)$ satisfying $n\leqslant (2\Delta-5)(2^{\Delta}-2)+4$ are more than pairs applies the condition
$\Delta<\log n-(1+\epsilon)\log\log n$.
The next difference relates to the inequalities $2\lfloor\frac{i}{2^{\Delta-1}-1}\rfloor+1\leqslant d(u_0,w)\leqslant 2\lfloor\frac{i}{2^{\Delta-1}-1}\rfloor+3$, where $w\in\{u_i,v_i\}$ and $i>\epsilon n$. We know that for $i_0=(\Delta-3)(2^{\Delta-1}-1)-(\Delta-2)$ the second inequality does not hold, but if $i_0<i<\frac{n}{2}-i_0$, then both inequalities hold. Our bound is in terms $\Delta$ instead of $\epsilon$ and $n$, which is always equal or less than $\epsilon n$.
Finally, since for each positive integer $n$, we have $2\lfloor\frac{\lfloor n/4\rfloor}{2^{\Delta-1}-1}\rfloor+1\leqslant \lceil\frac{n-2}{2^{\Delta}-2}\rceil+1\leqslant 2\lfloor\frac{\lfloor n/4\rfloor}{2^{\Delta-1}-1}\rfloor+3$, our result on the diameter of Kn\"odel graphs is a confirmation for the lower and upper bounds obtained in Theorem \ref{the12}.

Another important parameter of a Kn\"odel graph is its domination number, the least number of elements of a dominating set, that is, a set of vertices such that any vertex out of it, is adjacent to some vertex in it.
In \cite{hl}, an upper bound broadcast function is obtained by using minimum dominating sets of some Kn\"odel graphs.  However, there is still not much information about the domination number of Kn\"odel graphs. For more information see \cite{mmn1,mmn2,mmnj,xxyf}.

In Section 2, first, we introduce a correspondence between the set of walks in the Kn\"odel graph $W_{\Delta,n}$ and the set $\{\pm \underset{k=1}{\overset{m}{\Sigma}}(-1)^{k}a_k: a_k\in \mathscr{M}_{\Delta} , m=1,2,3,\cdots \}$, and then by each summation we make a walk, with definite length, from a vertex to another vertex. The shortest known walk between two vertices gives us an upper bound for their distance. These upper bounds allow us to obtain some formulas for calculating the distance between two vertices.
  Finally, in Section 3, using the results obtained in section 2, about the distance between vertices, we prove that $\diam(W_{\Delta,n})=1+\lceil\frac{n-2}{2^{\Delta}-2}\rceil$, where the number of vertices is sufficiently large.

\section{Distances in $W_{\Delta,n}$}

In this section, we focus on the distances between the vertices of $W_{\Delta,n}$. In the beginning, we present an observation. Since the definition of adjacency in the Kn\"odel graphs depends entirely on the powers of 2, to describe the paths, we always deal with powers of 2. For this reason, our observation is a number-theoretic observation.

\begin{observation}\label{obs21} If $0\leqslant a_0<a_1<a_2<\cdots<a_k$ then the equation $\underset{i=0}{\overset{k-1}{\sum}}2^{x_i}=\underset{i=0}{\overset{k}{\sum}}2^{a_i}$, in $x_0, x_1, \cdots, x_{k-1}$, has no solution in integers.
\end{observation}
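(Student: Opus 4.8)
The plan is to read both sides of the equation in base $2$ and compare binary digit sums. The right-hand side $\sum_{i=0}^{k}2^{a_i}$ is a sum of $k+1$ \emph{distinct} powers of two, hence a positive integer whose binary expansion has exactly $k+1$ nonzero digits. On the left we have a sum of only $k$ powers of two (with the $x_i$ a priori arbitrary integers, and repetitions allowed), and the point is to show that such a sum can never have $k+1$ ones in its binary expansion; in fact it has at most $k$. Since $k<k+1$, the two sides cannot be equal.

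First I would dispose of the possibility that some $x_i$ is negative, which is the one place that needs a little care, since then the individual terms $2^{x_i}$ are not integers and one cannot speak of their binary expansions directly. If $m:=-\min_i x_i>0$, multiply the whole equation by $2^{m}$: this replaces each $x_i$ by $x_i+m\ge 0$ and each $a_i$ by $a_i+m$, and $a_0+m<a_1+m<\cdots<a_k+m$ are again $k+1$ distinct nonnegative integers. So it suffices to rule out solutions in which all $x_i\ge 0$.

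The core step is then the elementary fact that if a nonnegative integer $N$ is written as a sum of $m$ powers of two with nonnegative integer exponents (repetitions allowed), then its binary digit sum $s_2(N)$ is at most $m$. I would prove this either from the subadditivity $s_2(a+b)\le s_2(a)+s_2(b)$ together with $s_2(2^e)=1$, or by induction on $m$ via ``carrying'': if the $m$ exponents are pairwise distinct then $s_2(N)=m$; otherwise two of them coincide, say both equal $e$, and replacing $2^{e}+2^{e}$ by $2^{e+1}$ expresses the same $N$ as a sum of $m-1$ powers of two, so induction applies (the base cases $m=0,1$ being trivial). Applying this to the common value $N$ of the two sides gives $s_2(N)\le k$ from the left-hand representation, while the right-hand representation gives $s_2(N)=k+1$; this contradiction finishes the proof.

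I expect no serious obstacle here: the only things to get right are the reduction to nonnegative exponents (so that ``binary expansion'' makes sense termwise) and a clean statement of the digit-sum / carrying lemma; after that the conclusion is the single inequality $k<k+1$.
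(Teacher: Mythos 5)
Your proof is correct. Note that the paper states this observation with no proof at all (it is evidently being treated as an immediate consequence of the uniqueness of binary representation), so there is no argument of the authors' to compare yours against; your write-up simply supplies the missing justification. The two points you single out as needing care are exactly the right ones. The reduction to nonnegative exponents by multiplying through by $2^{m}$ with $m=-\min_i x_i$ is genuinely needed, since the statement permits arbitrary integer exponents $x_i$ and the digit-sum argument only makes sense for integers; the multiplication preserves the distinctness and nonnegativity of the shifted $a_i+m$, so nothing is lost. After that, the bound $s_2(N)\le m$ for any $N$ written as a sum of $m$ powers of two with nonnegative exponents (by subadditivity of the binary digit sum, or by the carrying induction that merges $2^{e}+2^{e}$ into $2^{e+1}$) does all the work: the left side forces $s_2(N)\le k$ while the right side, a sum of $k+1$ distinct powers of two, forces $s_2(N)=k+1$, and $k<k+1$ closes the argument. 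The degenerate case $k=0$ (empty left-hand sum versus a positive right-hand side) is also covered by your base case.
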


In the next two lemmas, we show the relation between the set of walks in Kn\"odel graphs and the set of finite summations   $\pm \underset{k=1}{\overset{m}{\Sigma}}(-1)^{k-1}a_k$. Let us emphasize that the two summations $ \underset{k=1}{\overset{m_1}{\Sigma}}b_k$ and $ \underset{k=1}{\overset{m_2}{\Sigma}}c_k$ are equal if and only if $m_1=m_2$ and $b_k=c_k$, where $k=1,2,\cdots, m_1$.

\begin{lemma}\label{lem21}
In a Kn\"odel graph $W_{\Delta,n}$, each walk with the length $m$ gives a unique summation $\pm \underset{k=1}{\overset{m}{\Sigma}}(-1)^{k-1}a_k$, where $a_k\in \mathscr{M}_{\Delta}$ for $k=1,2,\cdots,m$. If $i$ and $j$ be the indices of the starting and ending points of the walk, respectively, then $j-i\equiv \pm \underset{k=1}{\overset{m}{\Sigma}}(-1)^{k-1}a_k \pmod {n/2}$.
\end{lemma}

\begin{proof}
The walks are divided into 4 categories in terms of their starting and ending points. Because of the similarity of methods, We describe one category in full and briefly present the other three cases.
\begin{enumerate}
\item The starting and ending points are in $V$: \smallskip\\
 Consider the walk $v_{j_1}u_{i_1}v_{j_2}u_{i_2}\cdots v_{j_{\ell}}u_{i_{\ell}}v_{j_{{\ell}+1}}$, with the length equal $m=2\ell$. For each $t$, $1\leqslant  t\leqslant \ell$, the vertex $u_{i_t}$ is adjacent with two vertices  $v_{j_t}$ and $v_{j_{t+1}}$. By definition of adjacency in the Kn\"odel graphs, we have $i_t-j_t\equiv -a_{2t-1}$ and $j_{t+1}-i_t\equiv a_{2t} \pmod {n/2}$, where $a_{2t-1},a_{2t}\in \mathscr{M}_{\Delta}$. Now , we have constructed the desired summation, that is, $-a_1+a_2-a_3+a_4-\cdots-a_{m-1}+a_m$ or $-\underset{k=1}{\overset{m}{\Sigma}}(-1)^{k-1}a_k$. Also, we have
  \vspace{-.2cm}
 \[-\underset{k=1}{\overset{m}{\Sigma}}(-1)^{k-1}a_k=-\underset{t=1}{\overset{\ell}{\Sigma}}(a_{2t-1}-a_{2t})\equiv -\underset{t=1}{\overset{\ell}{\Sigma}}(j_{t}-j_{t+1})=j_{\ell+1}-j_{1} \pmod {n/2},\] as desired.
\item  The starting and ending points are in $U$: \smallskip\\
 Consider the walk $u_{i_1}v_{j_1}u_{i_2}v_{j_2}\cdots u_{i_{\ell}}v_{j_{\ell}}u_{i_{{\ell}+1}}$, with the length equal $m=2\ell$. In this case, we set $j_t-i_t\equiv a_{2t-1}$ and $i_{t+1}-j_{t}\equiv -a_{2t} \pmod {n/2}$, where $1\leqslant  t\leqslant \ell$ and $a_{2t-1},a_{2t}\in \mathscr{M}_{\Delta}$. The desired summation is $a_1-a_2+a_3-a_4+\cdots+a_{m-1}-a_m$ or $\underset{k=1}{\overset{m}{\Sigma}}(-1)^{k-1}a_k$ and we have $\underset{k=1}{\overset{m}{\Sigma}}(-1)^{k-1}a_k=i_{\ell+1}-i_{1} \pmod {n/2}$.
\item  The starting point is in $V$ and ending point is in $U$: \smallskip\\
 The walk is $\;v_{j_1}u_{i_1}v_{j_2}u_{i_2}\cdots v_{j_{\ell}}u_{i_{\ell}}, \;$ with the length equal $m=2\ell-1$ and we have \\ $-\underset{k=1}{\overset{m}{\Sigma}}(-1)^{k-1}a_k=i_{\ell}-j_{1} \pmod {n/2}$.
\item  The starting point is in $U$ and ending point is in $V$: \smallskip\\
 The walk is $\;u_{i_1}v_{j_1}u_{i_2}v_{j_2}\cdots u_{i_{\ell}}v_{j_{\ell}}, \;$ with the length equal $m=2\ell-1$ and we have \\ $\underset{k=1}{\overset{m}{\Sigma}}(-1)^{k-1}a_k=j_{\ell}-i_{1} \pmod {n/2}$.
\end{enumerate}

\vspace*{-8mm}
\end{proof}

We note that Lemma \ref{lem21} does not claim the existence of a one-to-one correspondence between walks and summations. For example, in the the Kn\"odel graph $W_{3,8}$, four walks $u_0v_1u_2$, $u_1v_2u_3$, $u_2v_3u_0$ and $u_3v_0u_1$ give the summation $1-3$. In fact, in the Kn\"odel graph $W_{\Delta,n}$, each summation is related to $n/2$ different walks. This property is explaind in the following lemma.

\begin{lemma}\label{lem22}
Suppose that  $j-i \equiv \underset{k=1}{\overset{m}{\Sigma}}(-1)^{k-1}a_k \, \pmod {n/2}$, where $m$ is a positive integer, $i,j\in \{0,1,2,\cdots,n/2-1\}$ and $a_k\in \mathscr{M}_{\Delta}$ for $k=1,2,\cdots,m$. We have:
\begin{enumerate}
\itemsep=0.95pt
\item
If $m$ is an even integer, then there exists a walk between $u_i$ and $u_{j}$ with the length $m$.
\item
If $m$ is an odd integer, then there exists a walk between $u_i$ and $v_j$ with the length $m$.
\end{enumerate}
\end{lemma}

\begin{proof}
By definition of Kn\"odel graphs, we know that if $a\in \mathscr{M}_{\Delta}$ then $v_{i+a}\in N(u_i)$ and $u_{i-a}\in N(v_i)$. By this fact, we produce the walk corresponding to the given summation. \medskip\\
\noindent
If $m$ is even,  we consider the walk $u_{i_0} v_{i_1}u_{i_2}v_{i_3}\cdots v_{i_{m-3}} u_{i_{m-2}} v_{i_{m-1}}u_{i_m}$  and if $m$ is odd, we consider the walk $u_{i_0} v_{i_1}u_{i_2}v_{i_3}\cdots v_{i_{m-2}} u_{i_{m-1}} v_{i_{m}}$.
In each case, we define $i_0=i $ and $i_{k}\equiv i_{k-1}+(-1)^{k-1}a_k  \pmod {n/2}$ for $k=1,2,\cdots,m$.
 Now, we see that \[j_m\equiv i_0+\underset{k=1}{\overset{m}{\Sigma}}(-1)^{k-1}a_k=i+\underset{k=1}{\overset{m}{\Sigma}}(-1)^{k-1}a_k\equiv j\, \pmod {n/2}\] Hence, we have $u_{i_m}=u_j$ in case (1) and $v_{i_m}=v_j$ in case (2), and the proof is completed.
\end{proof}

From now on, using vertex transitivity, we choose $u_0$ as root vertex. In the following lemma, we obtain a symmetry in the distances between the vertices of part $U$.

\begin{lemma}\label{lem23}
In each Kn\"odel graph $W_{\Delta,n}$ we have:
  \vspace{-.1cm}
\[d(u_0,u_i)=d(u_0,u_{\frac{n}{2}-i}), \quad  i=1,2,\cdots,\lfloor\frac{n}{4}\rfloor\]
\end{lemma}
\begin{proof}
For each $i\in\{1,2,\cdots,\lfloor\frac{n}{4}\rfloor\}$, we consider the graph automorphism $\sigma_i$, that maps the vertex $u_j$ to the vertex $u_{j+i}$, where $0\leqslant j\leqslant \frac{n}{2}-1$.  Hence, we have
\[d(u_0,u_{\frac{n}{2}-i})=d(\sigma_i(u_0),\sigma_i(u_{\frac{n}{2}-i}))=d(u_i,u_{\frac{n}{2}})=d(u_i,u_0)=d(u_0,u_i),\]
as desired. Note that $\dfrac{n}{2}\equiv 0\, \pmod {n/2}$ and so $u_{\frac{n}{2}}=n_0$.
\end{proof}

The following lemma and its corollary give a lower bound for the distance between $u_0$ and every vertex in part $U$.

\begin{lemma}\label{lem24}
If $(k-1)s<i\leqslant\lfloor\frac{n}{4}\rfloor$ for some positive integer $k$, then $d(u_0,u_i)\geqslant 2k$.
\end{lemma}
\begin{proof}
Assume that  $d(u_0,u_i)=2r$ and the path $u_{0}v_{i_1}u_{j_1}v_{i_2}u_{j_2}\cdots v_{i_r}u_{j_r}$ has the length $2r=d(u_0,u_i)$, where $u_{j_r}=u_{i}$ and so $ i\equiv j_r \pmod {n/2}$. We show that $r\geqslant k$. On the contrary, assume that $r\leqslant k-1$. By Lemma \ref{lem22}, we have $i_1=a_1 $ , $i_t=\underset{l=1}{\overset{t-1}\sum}(a_l-b_l)+a_t, 2\leqslant t\leqslant r $  and $j_t=\underset{l=1}{\overset{t}\sum}(a_l-b_l), 1\leqslant t\leqslant r $, where $a_l,b_l\in\mathscr{M}_\Delta$. Thus,
 $-s\leqslant a_l-b_l \leqslant s$ and
  \vspace{-.2cm}
   \[-\lfloor\frac{n}{4}\rfloor \leqslant -i<-(k-1)s\leqslant -rs\leqslant j_r \leqslant rs\leqslant (k-1)s<i\leqslant \lfloor\frac{n}{4}\rfloor \]
 Now, we have $0< i-j_r<2\lfloor\frac{n}{4}\rfloor\leqslant \frac{n}{2}$ and so $i-j_r \not\equiv 0  \pmod {n/2}$
  or $i \not\equiv j_r  \pmod {n/2}$, a contradiction. Therefore, $r\geqslant k$ that  implies $r\geqslant k$ and $2r=d(u_0,u_i)\geqslant 2k$.
\end{proof}

\begin{corollary}\label{cor25}
If $1\leqslant i\le\lfloor\frac{n}{4}\rfloor$, then $d(u_0,u_i)\geqslant 2\lceil\frac{i}{s}\rceil$.
\end{corollary}
\begin{proof}
We set $k=\lceil\frac{i}{s}\rceil$. Hence, $k-1<\frac{i}{s} \leqslant k$ and so $(k-1)s<i\leqslant ks$. \medskip\\
 Since $(k-1)s<i\leqslant \min\{ks, \lfloor\frac{n}{4}\rfloor\}$, by Lemma \ref{lem24} we obtain that $d(u_0,u_i)\geqslant 2k=2\lceil\frac{i}{s}\rceil$.
\end{proof}

In the next lemma, we calculate the exact value of the distance between $u_0$ and some special vertices in part $U$.

\begin{lemma}\label{lem26}
If $i=ks\leqslant\lfloor\frac{n}{4}\rfloor$ for some positive integer $k$, then $d(u_0,u_i)=2k=\frac{2i}{s}$.
\end{lemma}
\begin{proof}
If $i=ks\le\lfloor\frac{n}{4}\rfloor$ and $k$ is a positive integer, then $ i \not\equiv 0  \pmod {n/2}$ and there exists a path $u_{0}v_{s}u_{s}v_{2s}u_{2s}\dots v_{ks}u_{ks}$ between $u_0$ and $u_{ks}$ with the length equal $2k$. This implies that $d(u_0,u_i)\leqslant 2k$. On the other hand, by Corollary \ref{cor25}, we know that $d(u_0,u_i)\geqslant 2\lceil\frac{i}{s}\rceil=2k$.
 Finaly, by this two inequality, the equality $d(u_0,u_i)=2k=\frac{2i}{s}$ holds, as desired.
\end{proof}

To continue, we have to express a property of the set $\mathscr{M}_{\Delta}$. Indeed, to find the summations introduced in Lemmas \ref{lem21} and  \ref{lem22}, it is sufficient to establish the following two lemmas.

\begin{lemma}\label{lem27}
If $\Delta\geqslant 3$ and $a$ is an integer with $0\leqslant a\leqslant 2^{\Delta-1}-2$ and $a\ne 2^{\Delta-1}-(\Delta-1)$, then the equation $y_1+y_2+\cdots+y_{\Delta-2}=a$ has a solution in $\mathscr{M}_{\Delta-1}$.
\end{lemma}
\begin{proof}We prove this lemma by induction on $\Delta$. If $\Delta=3$, then we have $0\leqslant a\leqslant 1$. Obviously, the equation $y_1=a$ has a solution in $\mathscr{M}_{2}=\{0,1\}$.\\
Assume that the equation $y_1+y_2+\cdots+y_{\Delta-2}=a$ with $0\leqslant a\leqslant 2^{\Delta-1}-2$ and $a\ne 2^{\Delta-1}-(\Delta-1)$ has a solution in $\mathscr{M}_{\Delta-1}$ for some $\Delta\geqslant3$. We show that the equation

\vspace{-3mm}
 $$   \qquad  \qquad\qquad  \qquad   y_1+y_2+\cdots+y_{\Delta-2}+y_{\Delta-1}=a\qquad\qquad\qquad \qquad\qquad(*)$$

\noindent
with $0\leqslant a\leqslant 2^{\Delta}-2$ and $a\ne 2^{\Delta}-\Delta$ has a solution in $\mathscr{M}_{\Delta}$.\\
For this, we consider four distinct cases: \medskip \\
\textbf{Case 1.} If $0\leqslant a\leqslant 2^{\Delta-1}-2$ and $a\ne 2^{\Delta-1}-(\Delta-1)$, then we set $y_{\Delta-1}=0$ and by the induction hypothesis, the equation $y_1+y_2+\cdots+y_{\Delta-2}=a$ has a solution in $\mathscr{M}_{\Delta}$. \smallskip\\
\textbf{Case 2.} If $a=2^{\Delta-1}-(\Delta-1)$, then $y_i=2^i-1$ for $i=1,2,\cdots,\Delta-2$ and $y_{\Delta-1}=1$ give a solution to the equation $(*)$ in $\mathscr{M}_{\Delta}$. \smallskip\\
\textbf{Case 3.} If $2^{\Delta-1}-1\leqslant a\leqslant 2^{\Delta}-3$ and $a\ne 2^{\Delta}-\Delta$, then we set $y_{\Delta-1}=2^{\Delta-1}-1$ and $a'=a-(2^{\Delta-1}-1)$. We have $0\leqslant a'\leqslant 2^{\Delta-1}-2$ and $a'\ne 2^{\Delta-1}-(\Delta-1)$. Now, by induction hypothesis  the equation $y_1+y_2+\cdots+y_{\Delta-2}=a'$ has a solution in $\mathscr{M}_{\Delta}$. \smallskip\\
\textbf{Case 4.} If $a=2^{\Delta}-2$, then $y_1=y_2=2^{\Delta-1}-1$ and $y_3=\cdots=y_{\Delta-1}=0$ give a solution to the equation $(*)$ in $\mathscr{M}_{\Delta}$.
\end{proof}

Here, we consider a specific case. We will use it to verifying the sharpness of an upper bound introduced in Lemma \ref{lem29}.

\begin{lemma}\label{lem28}
If $\Delta\ge3$ and $a_{\Delta}=2^{\Delta-1}-(\Delta-1)$, then the equation $y_1+y_2+\cdots+y_{\Delta-1}=a_{\Delta}$ has a solution in $\mathscr{M}_{\Delta-1}$, but the equation $y_1+y_2+\cdots+y_{\Delta-2}=a_{\Delta}$ has no solution in $\mathscr{M}_{\Delta-1}$.
\end{lemma}
\begin{proof}
If $a_{\Delta}=2^{\Delta-1}-(\Delta-1)$, then by setting  $y_i=2^i-1$ for $i=1,2,\cdots,\Delta-2$ and $y_{\Delta-1}=1$, we have a solution to the equation $y_1+y_2+\cdots+y_{\Delta-1}=a_{\Delta}$ in $\mathscr{M}_{\Delta-1}$.

\medskip
We prove the second statement by contradiction.
On the contrary, assume that the equation $y_1+y_2+\cdots+y_{\Delta-2}=a_{\Delta}$ has a solution  in $\mathscr{M}_{\Delta-1}$, for some $\Delta\ge3$. Hence,
$\underset{i=1}{\overset{\Delta-2}{\sum}}(2^{x_i}-1)=2^{\Delta-1}-(\Delta-1)$, where $x_i$'s are non-negative integers less than $\Delta-1$.
Now, we have $\underset{i=1}{\overset{\Delta-2}{\sum}}2^{x_i}=2^{\Delta-1}-1=\underset{i=1}{\overset{\Delta-1}{\sum}}2^{i-1}$, a contradiction to Observation \ref{obs21}.
\end{proof}

In the next lemma, we obtain an upper bound for the distance between $u_0$ and some vertices of the part $U$ and the exact distance for the other vertices in $U$. For this, Lemma \ref{lem27} will help us.

\begin{lemma}\label{lem29}
 In a Kn\"odel graph $W_{\Delta,n}$, with $\Delta\geqslant 3$ and $n\geqslant 4(\Delta-3)(2^{\Delta-1}-1)+4$ we have:
 \begin{enumerate}
 \itemsep=0.9pt
\item
  If $0\leqslant i \leqslant (\Delta-3)s$, then $d(u_0,u_i)\leqslant 2(\Delta-2) $.
  \item
  If $(\Delta-3)s+1\leqslant \min\{i,n/2-i\} $, then $d(u_0,u_i)=d(u_0,u_{\frac{n}{2}-i})=2\lceil \frac{\min\{i,n/2-i\}}{s}\rceil$.
  \item
 $d(u_0,u_{\lfloor\frac{n}{4}\rfloor})=2\lceil \frac{1}{s}\lfloor\frac{n}{4}\rfloor\rceil$.
 \end{enumerate}
\end{lemma}

\begin{proof}

\vspace*{-9mm}
 \begin{enumerate}
\item
 If $\Delta=3$, then $i=0$ and $d(u_0,u_i)=0\leqslant 2(3-2)$. Suppose that $\Delta\geqslant 4$. We set $a=\lceil \frac{i}{s}\rceil s-i$ and we have $\lceil \frac{i}{s}\rceil \leqslant\Delta-3$ and $0\leqslant a\leqslant s-1=2^{\Delta-1}-2$.
 We will construct a walk between $u_0$ and $u_i$ with the length equal to $2(\Delta-2)$. For this purpose, we show that the equation $i=\underset{\ell=1}{\overset{\Delta-2}\sum}(a_{\ell}-b_{\ell})$ has a solution in $\mathscr{M}_{\Delta}$. There are two cases for $a$.

 \textbf{(i)} $a\ne a_{\Delta}=2^{\Delta-1}-(\Delta-1)$. In this case, we set $a_{\ell}=s$ for $1\leqslant \ell \leqslant \lceil \frac{i}{s}\rceil$ and $a_{\ell}=0$ for $\lceil \frac{i}{s}\rceil+1\leqslant \ell \leqslant \Delta-2$. Now, we have $\lceil \frac{i}{s}\rceil s-i=\underset{\ell=1}{\overset{\Delta-2}\sum}b_{\ell}$
or $a=\underset{\ell=1}{\overset{\Delta-2}\sum}b_{\ell}$. Hence, by Lemma \ref{lem27}, the equation has a solution in $\mathscr{M}_{\Delta}$.

 \textbf{(ii)} $a=a_{\Delta}=2^{\Delta-1}-(\Delta-1)$. In this case, we set $a_{\ell}=s$ for $1\leqslant \ell \leqslant \lceil \frac{i}{s}\rceil$, $a_{\ell}=0$ for $\lceil \frac{i}{s}\rceil+1\leqslant \ell < \Delta-2$ and $a_{\Delta-2}=1$. Now, we have $\lceil \frac{i}{s}\rceil s+1-i=\underset{\ell=1}{\overset{\Delta-2}\sum}b_{\ell}$
or $a+1=\underset{\ell=1}{\overset{\Delta-2}\sum}b_{\ell}$. Since $a+1\ne a_{\Delta}$ and $0\leqslant a+1\leqslant 2^{\Delta-1}-2$, by Lemma \ref{lem27}, the equation has a solution in $\mathscr{M}_{\Delta}$.
\item
 By symmetry, we assume that $i\leqslant \lfloor\frac{n}{4}\rfloor \leqslant \frac{n}{2}-i$ and by Corollary \ref{cor25} we have $d(u_0,u_i)\geqslant 2\lceil \frac{i}{s}\rceil$. We set $k=\lceil \frac{i}{s}\rceil$, $a=ks-i$ and by introducing a walk between $u_0$ and $u_i$ with the length $2k$, we show that $d(u_0,u_i)\leqslant 2\lceil \frac{i}{s}\rceil$.
 Due to the Lemmas \ref{lem21} and \ref{lem22}, we have to solve the equation $i=\underset{\ell=1}{\overset{k}\sum}(a_{\ell}-b_{\ell})$ in $\mathscr{M}_{\Delta}$.
Now, $k\ge\Delta-2$, $0\leqslant a\leqslant s-1=2^{\Delta-1}-2$ and there are two cases for $a$.

 \noindent
 \textbf{(i)} $a\ne a_{\Delta}$. We set $a_{\ell}=s$ for $1\leqslant \ell \leqslant k$ and  $b_{\ell}=0$ for $\Delta-1 \leqslant \ell \leqslant k$, if $k\geqslant \Delta-1$. Therefore, we have $a=ks-i=\underset{i=1}{\overset{\Delta-2}\sum}y_i$ and by Lemma \ref{lem27}, this equation has a solution in $\mathscr{M}_{\Delta}$.

  \noindent
  \textbf{(ii)} $a=a_{\Delta}$. We set $a_{\ell}=s$ for $1\leqslant \ell < k$, $a_{k}=\frac{s-1}{2}$ and  $b_{\ell}=0$ for $\Delta-1 \leqslant \ell \leqslant k$, if $k\geqslant \Delta-1$. Therefore, we have
$(k-1)s+\frac{s-1}{2}-i=\underset{i=1}{\overset{\Delta-2}\sum}y_i$ or $a-\dfrac{s-1}{2}=\underset{i=1}{\overset{\Delta-2}\sum}y_i$.
Since $a-\frac{s-1}{2}\ne a_{\Delta}\!=$ and $0\leqslant a-\frac{s-1}{2}\leqslant 2^{\Delta-1}-2$, by Lemma \ref{lem27} the equation
has a solution in~$\mathscr{M}_{\Delta}$.
\item
 As a special case of (2), Since $(\Delta-3)s+1\leqslant \lfloor\frac{n}{4}\rfloor \leqslant \frac{n}{2}-\lfloor\frac{n}{4}\rfloor $, we have: \vspace*{-1mm}
\[  d(u_0,u_{\lfloor\frac{n}{4}\rfloor})=2\lceil \dfrac{1}{s}\lfloor\dfrac{n}{4}\rfloor\rceil \]
 \end{enumerate}

 \vspace*{-9mm}
\end{proof}

In the following lemma, we show the sharpness of the upper bound in Lemma \ref{lem29}(1).
\begin{lemma}\label{lem210}
Suppose that $\Delta\ge4$ and $n\geqslant (4\Delta-13)(2^{\Delta-1}-1)+(2\Delta-3)$. \\
If $i_{\Delta}=(\Delta-3)(2^{\Delta-1}-1)-(2^{\Delta-2}-(\Delta-2))$, then $d(u_0,u_{i_{\Delta}})=2(\Delta-2)$.
\end{lemma}

\begin{proof}
Since $i_{\Delta}\le(\Delta-3)s$, by Part (1) of Lemma \ref{lem29}, we know that $d(u_0,u_{i_{\Delta}})\le2(\Delta-2)$. We show that the equation $i_{\Delta}=\underset{\ell=1}{\overset{\Delta-3}\sum}(a_{\ell}-b_{\ell})$ has no solution in $\mathscr{M}_{\Delta}$. On the contrary, suppose that the equation \vspace*{-3mm}
\[(\Delta-3)s-(2^{\Delta-2}-(\Delta-2))=\underset{\ell=1}{\overset{\Delta-3}\sum}(a_{\ell}-b_{\ell}) \vspace*{-2mm}\]
 has a solution in $\mathscr{M}_{\Delta}$.
Therefore, the inequalities  \vspace*{-1mm}
\[\underset{\ell=1}{\overset{\Delta-3}\sum}a_{\ell}\geqslant (\Delta-3)s-(2^{\Delta-2}-(\Delta-2))>(\Delta-4)s+\frac{s-1}{2}  \vspace*{-1mm} \]
 implies that $a_{\ell}=s$ for all $\ell=1,2,\cdots, \Delta-3$ and so  \vspace*{-1mm}
\[\underset{\ell=1}{\overset{\Delta-3}\sum}b_{\ell}= 2^{\Delta-2}-(\Delta-2)=i_{\Delta-1}.  \vspace*{-2mm}\]
Since $i_{\Delta-1}< 2^{\Delta-2}-1$ we obtain $b_{\ell}\in\mathscr{M}_{\Delta-2}$, $\ell=1,2,\cdots,\Delta-3$, that is, the equation  $\underset{\ell=1}{\overset{\Delta-3}\sum}b_{\ell}=i_{\Delta-1}$ has a solution in $\mathscr{M}_{\Delta-2}$, a contradiction with Lemma \ref{lem28}.
Hence, we have $d(u_0,u_{i_{\Delta}})>2(\Delta-3)$ and by $d(u_0,u_{i_{\Delta}})\le2(\Delta-2)$, we deduce that $d(u_0,u_{i_{\Delta}})=2(\Delta-2)$.
\end{proof}

\begin{example}
If $\Delta=4$ and $n\ge26$, then we have $5\leqslant (\Delta-3)s$ and the path $u_0v_7u_6v_6u_5$, in Figure~2 is a $u_0u_5$-path with the length $4$.
Since $u_0$ and $u_5$ have no common neighbors, $d(u_0,u_5)>2$ and so $d(u_0,u_5)=4=2(\Delta-2)$. Thus, the upper bound in Lemma \ref{lem29}(1) is sharp.
\end{example}
\begin{figure}[h]\label{fig2}
\begin{center}
\begin{tikzpicture}[scale=1]
\foreach \i in {0,1,2,...,12} \foreach \t in {0,1,3,7}{\draw (\i,2)--({\i+\t-13*int((\i+\t)/13)},0); \node at ({\i},2.3){$u_{\i}$};\node at ({\i},-.3){$v_{\i}$}; }
\draw[red,very thick] (0,2)--(7,0)--(6,2)--(6,0)--(5,2);
\foreach \i in {0,1,2,...,12} \foreach \j in {0,2}{\filldraw[fill=white] (\i,\j) circle(3pt); }
\end{tikzpicture}
\end{center}
\vspace{-5mm}
\caption{ $W_{4,26}$ and a shortest $u_0u_5$-path.}\vspace*{-2mm}
\end{figure}

\begin{corollary}\label{cor212}
In a Kn\"odel graph $W_{\Delta,n}$, with $\Delta\geqslant 3$ and $n\geqslant 4(\Delta-3)(2^{\Delta-1}-1)+4$, if $(\Delta-3)s+1\leqslant i \leqslant j \leqslant \lfloor\frac{n}{4}\rfloor$, then $d(u_0,u_i)\leqslant d(u_0,u_j)\leqslant d(u_0,u_{\lfloor\frac{n}{4}\rfloor})=2\lceil \frac{1}{s}\lfloor\frac{n}{4}\rfloor\rceil$.
\end{corollary}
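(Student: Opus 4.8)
The plan is to reduce the whole statement to Lemma~\ref{d=2i/s}(ii) together with the monotonicity of the ceiling function; there is no genuine obstacle, only a small point of bookkeeping to make sure the relevant interval is covered by part (ii) of that lemma rather than by part (i).

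First I would observe that every index $m$ with $(\Delta-3)s+1\le m\le\lfloor\frac{n}{4}\rfloor$ meets the hypothesis of Lemma~\ref{d=2i/s}(ii). Indeed, $m\le\lfloor\frac{n}{4}\rfloor\le\frac{n}{4}$ forces $m\le\frac{n}{2}-m$, so $\min\{m,\frac{n}{2}-m\}=m\ge(\Delta-3)s+1$, and Lemma~\ref{d=2i/s}(ii) gives $d(u_0,u_m)=2\lceil\frac{m}{s}\rceil$. I would apply this in turn to $m=i$, to $m=j$, and to $m=\lfloor\frac{n}{4}\rfloor$; the last application is legitimate because the size hypothesis $n\ge 4(\Delta-3)(2^{\Delta-1}-1)+4=4(\Delta-3)s+4$ yields $\frac{n}{4}\ge(\Delta-3)s+1$ and hence $\lfloor\frac{n}{4}\rfloor\ge(\Delta-3)s+1$ (an integer inequality). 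This produces the three exact values $d(u_0,u_i)=2\lceil\frac{i}{s}\rceil$, $d(u_0,u_j)=2\lceil\frac{j}{s}\rceil$, and $d(u_0,u_{\lfloor n/4\rfloor})=2\lceil\frac{1}{s}\lfloor\frac{n}{4}\rfloor\rceil$.

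Finally, since $i\le j\le\lfloor\frac{n}{4}\rfloor$ and the map $x\mapsto\lceil\frac{x}{s}\rceil$ is non-decreasing, I obtain $\lceil\frac{i}{s}\rceil\le\lceil\frac{j}{s}\rceil\le\lceil\frac{1}{s}\lfloor\frac{n}{4}\rfloor\rceil$; multiplying by $2$ and substituting the distances computed above chains the three quantities exactly as asserted, which completes the argument.
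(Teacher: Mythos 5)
Your proposal is correct and follows essentially the same route as the paper: both reduce the claim to Lemma~\ref{d=2i/s}(ii) and the monotonicity of the ceiling function, with your version merely spelling out the verification that each index in the range satisfies the hypothesis of part (ii).
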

\begin{proof}
We know that the ceil function is an increasing function and so Lemma \ref{lem29} easily concludes the results.
\end{proof}

The following theorem gives us the maximum distance between the vertices in the part $U$. This value is a candidate for the diameter of $W_{\Delta,n}$.

\begin{corollary}\label{cor213}
  In a Kn\"odel graph $W_{\Delta,n}$, if $\Delta\geqslant 3$ and $n\geqslant 4(\Delta-3)(2^{\Delta-1}-1)+4$, then $\underset{i}{\max}\, d(u_0,u_i)= d(u_0,u_{\lfloor\frac{n}{4}\rfloor})=2\lceil \frac{1}{s}\lfloor\frac{n}{4}\rfloor\rceil$.
  \end{corollary}
\begin{proof}
There are three distinct cases for $i$'s: \medskip\\
\textbf{Case 1:} $i=0,1,\cdots,(\Delta-3)s$. We have $\Delta -2 \leqslant \lceil \frac{(\Delta-3)s+1}{s}\rceil \leqslant \lceil \frac{1}{s}\lfloor\frac{n}{4}\rfloor\rceil$ and by Lemma \ref{lem29}(i),
\vspace{-.2cm}
\[d(u_0,u_i) \leqslant 2(\Delta-2) \leqslant  d(u_0,u_{\lfloor\frac{n}{4}\rfloor})=2\lceil \frac{1}{s}\lfloor\frac{n}{4}\rfloor\rceil.\]
\textbf{Case 2:} $i=(\Delta-3)s+1,\cdots, \lfloor\frac{n}{4}\rfloor$. By Corollary \ref{cor212}, we have
\vspace{-.2cm}
\[d(u_0,u_i) \leqslant  d(u_0,u_{\lfloor\frac{n}{4}\rfloor})=2\lceil \frac{1}{s}\lfloor\frac{n}{4}\rfloor\rceil.\]
\textbf{Case 3:} $i=\lfloor\frac{n}{4}\rfloor+1,\cdots,\frac{n}{2}-1$. We have $1\leqslant \frac{n}{2}-i \leqslant \lfloor\frac{n}{4}\rfloor $. Now, using Lemma \ref{lem23} and prevoius parts, we conclude that
\vspace{-.3cm}
 \[d(u_0,u_i)= d(u_0,u_{\frac{n}{2}-i})
 \leqslant  d(u_0,u_{\lfloor\frac{n}{4}\rfloor})=2\lceil \frac{1}{s}\lfloor\frac{n}{4}\rfloor\rceil.\]
\noindent
 Hence, for each $i$ we have $d(u_0,u_i)
 \leqslant  d(u_0,u_{\lfloor\frac{n}{4}\rfloor})=2\lceil \frac{1}{s}\lfloor\frac{n}{4}\rfloor\rceil$ which implies that:\\
\vspace{-.2cm}

\hspace{4cm}
$\underset{i}{\max}\, d(u_0,u_i)= d(u_0,u_{\lfloor\frac{n}{4}\rfloor})=2\lceil \dfrac{1}{s}\lfloor\dfrac{n}{4}\rfloor\rceil$
\end{proof}

We got some useful results on distances between the vertices in part $U$. Let's move on to the part $V$. First, we note that by transitivity of Kn\"odel graphs, we have $d(u_i,u_j)=d(v_i,v_j)$ for all $i$ and $j$. Then, we take advantage of the fact that the distance of $u_0$ and a vertex  $v_j\in V$ is related to the distances of $u_0$ and the neighbors of $v_j$ in part $U$. After proving the following proposition, we will have the second and last candidate for the diameter of $W_{\Delta,n}$.

  \begin{proposition}\label{prop214}
In the Kn\"odel graph $W_{\Delta,n}$ we have :
 \begin{enumerate}
 \itemsep=-0.95pt
 \item
 If $x$ and $y$ be two adjacent vertices, then we have $|d(u_0,x)-d(u_0,y)|=1$.
\item
 For each $j=0,1,\cdots, \frac{n}{2}-1$ there is an $i=0,1,\cdots, \frac{n}{2}-1$ such that $d(u_0,u_i)=d(u_0,v_j)-1$.
\item
 For each $j=0,1,\cdots, \frac{n}{2}-1$ we have $d(u_0,v_j)=1+\min\{d(u_0,u_i): u_i\in N(v_j)  \}$.
 \item
$\underset{j}{\max}\, d(u_0,v_j)\leqslant 1+2\lceil \dfrac{1}{s}\lfloor\dfrac{n}{4}\rfloor\rceil$.
 \end{enumerate}
\end{proposition}

\begin{proof}

\vspace*{-7mm}
 \begin{enumerate}
 \itemsep=0.95pt
 \item
  If $u_0x_1x_2\cdots x_kx$ be a path between $u_0$ and $x$
, then we can find the walk $u_0x_1x_2\cdots x_kxy$ between $u_0$ and $y$. This shows that $d(u_0,y)\leqslant 1+d(u_0,x)$ and  similarly, we have $d(u_0,x)\leqslant 1+d(u_0,y)$. These two inequalities confirm that $|d(u_0,x)-d(u_0,x)|=1$. Note that $d(u_0,x)$ and $d(u_0,y)$ are unequal in terms of parity.
 \item
 If $d(u_0,v_j)=1$, then we set $i=0$ and we have $d(u_0,u_0)=d(u_0,v_j)-1$. If $d(u_0,v_j)=2r+1$ and $r
\geqslant 1$, then there is a path $u_0v_{j_1}u_{i_1}\cdots v_{j_r}u_{i_r}v_j$ between $u_0$ and $v_j$ with the length $2r+1$. Now, we set $i=i_r$ and so $2r=d(u_0,u_{i_r})=d(u_0,v_j)-1$, as desired.
 \item
 We set $2r=\min\{d(u_0,u_i): u_i\in N(v_j)  \}$. Hence $v_j$ has an adjacent $u_i$ such that $d(u_0,u_i)=2r$ and so by (i) we have $d(u_0,v_j)\in\{2r-1, 2r+1\}$. We have to show that $d(u_0,v_j)=2r+1$. On the contrary, assume that $d(u_0,v_j)=2r-1$. By (ii), $v_j$ has to have an adjacent $u_i$ such that $d(u_0,u_i)=2r-2$, a contradiction by minimality of $2r$.
Therefore, $d(u_0,v_j)=2r+1$ and  the result is obtained.
 \item
  It is obvious that for each $j$, we have $\min\{d(u_0,u_i): u_i\in N(v_j)  \}\leqslant \underset{i}{\max}\, d(u_0,u_i)$ and $\underset{j}{\max}\, d(u_0,v_j)=d(u_0,v_{\ell})$ for some $\ell$.
Now, by (iii) we have $\underset{j}{\max}\, d(u_0,v_j)=1+\min\{d(u_0,u_i): u_i\in N(v_{\ell})  \}\leqslant 1+\underset{i}{\max}\, d(u_0,u_i)$ and finally, by Corollary \ref{cor213} we have $\underset{j}{\max}\, d(u_0,v_j)\leqslant 1+2\lceil \frac{1}{s}\lfloor\frac{n}{4}\rfloor\rceil$ as desired.
 \end{enumerate}

 \vspace*{-8mm}
\end{proof}

Corollary \ref{cor213} gives the maximum value of distances between two vertices in the same part, but Proposition \ref{prop214} gives an upper bound for distances between two vertices in distinct parts. We show that these upper bounds are sharp.
\begin{lemma}\label{lem215}
 In a Kn\"odel graph $W_{\Delta,n}$, with $\Delta\geqslant 3$ and $(4k-2)s+4 \leqslant n \leqslant 4ks+2$ where $k\geqslant \Delta-2$, we have:
 \begin{enumerate}
 \itemsep=0.95pt
 \item
 $d(u_0,u_i)=2k=2\lceil \frac{1}{s}\lfloor\frac{n}{4}\rfloor\rceil$ for $(k-1)s+1\leqslant i \leqslant \frac{n}{2}-(k-1)s-1$.
 \item
 $d(u_0,v_j)=1+2k=1+2\lceil \frac{1}{s}\lfloor\frac{n}{4}\rfloor\rceil$ for $ks+1\leqslant j \leqslant \frac{n}{2}-(k-1)s-1$.
\end{enumerate}
\end{lemma}
\eject

\begin{proof}
Since $n\geqslant 2(2\Delta-5)s+4\geqslant 4(\Delta-3)s+4$, using Corollary \ref{cor213} we have $\underset{i}{\max}\,\, d(u_0,u_i)=2\lceil \frac{1}{s}\lfloor\frac{n}{4}\rfloor\rceil$.
 \begin{enumerate}
 \item
 In the first case, we have $ks+1-\frac{s}{2}\leqslant \frac{n}{4}\leqslant ks+\frac{1}{2}$ and $(k-1)s<ks+1-\frac{s+1}{2}\leqslant \lfloor\frac{n}{4}\rfloor\leqslant ks$. This inequalities imply  that $\lceil \frac{1}{s}\lfloor\frac{n}{4}\rfloor\rceil=k$ and we have $\underset{i}{\max}\,\, d(u_0,u_i)=2k=d(u_0,u_{\lfloor\frac{n}{4}\rfloor})$.
Since $(k-1)s+1\leqslant \min\{i,\frac{n}{2}-i\}$, by Lemma \ref{lem29} we have \[d(u_0,u_i)=d(u_0,u_{\frac{n}{2}-i})=2\lceil \frac{\min\{i,n/2-i\}}{s}\rceil\geqslant 2\lceil \frac{(k-1)s+1}{s}\rceil=2k\]and so by maximality of $d(u_0,u_{\lfloor\frac{n}{4}\rfloor})=2k$,
for $(k-1)s+1\leqslant i \leqslant \frac{n}{2}-(k-1)s-1$,
 we have $d(u_0,u_i)=2k$.
\item
 Now, we consider the vertex $v_j$, where $ks+1\leqslant j \leqslant \frac{n}{2}-(k-1)s-1$ and compute $d(u_0,v_j)$.
From Proposition \ref{prop214} we know that $d(u_0,v_j)=1+\min\{d(u_0,u_i):u_i\in N(v_j)\}$. Assume that $u_i\in N(v_j)$, We have
$i\equiv j-b  \pmod {n/2}$ for some $b\in\mathscr{M}_{\Delta}$. On the other hand, from  $1\leqslant j-s\leqslant j-b \leqslant \frac{n}{2}$
 we deduce that $i=j-b$.
Therefore, $ j-s\leqslant i \leqslant j$ and so $(k-1)s+1\leqslant i \leqslant \frac{n}{2}-(k-1)s-1$. Now, by the previous part, we have $d(u_0,u_i)=2k$, that is, $\{d(u_0,u_i):u_i\in N(v_j)\}=\{2k\}$. Therefore, $d(u_0,v_j)=2k+1$ and the proof is completed.
 \end{enumerate}\vspace*{-4mm}
\end{proof}

 \section{ Main result}
 In this section, we give the exact value of the diameter of some Kn\"odel graphs with sufficiently large order respect to the degree of their vertices.

\begin{lemma}\label{lem31}
 In a Kn\"odel graph $W_{\Delta,n}$, with $\Delta\geqslant 3$ we have:
  \begin{enumerate}
\item
 If $(4k-2)s+4 \leqslant n \leqslant 4ks+2$ and $k\geqslant \Delta-2$, then
  $\diam(W_{\Delta,n})=2k+1$.
 \item
  If $4ks+4 \leqslant n \leqslant (4k+2)s+2$ and $k\geqslant \Delta-2$, then
  $\diam(W_{\Delta,n})=2k+2$.
 \end{enumerate}
  \end{lemma}

\begin{proof}

\vspace*{-7mm}
 \begin{enumerate}
\item
 In this case, we consider the vertex $v_{\lfloor\frac{n+2s}{4}\rfloor}$
and compute $d(u_0,v_{\lfloor\frac{n+2s}{4}\rfloor})$.
Since $(4k-2)s+4 \leqslant n$, we have $4ks+4 \leqslant n+2s
\leqslant 2n-4(k-1)s-4$ and so $ks+1\le\frac{n+2s}{4}\leqslant \frac{n}{2}-(k-1)s-1$. By Lemma \ref{lem215} we have $d(u_0,v_{\lfloor\frac{n+2s}{4}\rfloor})=2k+1=
2\lceil \frac{1}{s}\lfloor\frac{n}{4}\rfloor\rceil+1=\underset{i}{\max}\, d(u_0,u_i)+1$. Now by Corollary \ref{cor213} and Proposition \ref{prop214} we deduce that $\diam(W_{\Delta,n})=d(u_0,v_{\lfloor\frac{n+2s}{4}\rfloor})=2k+1$.
\item
 We have $ks+1\leqslant \lfloor\frac{n}{4}\rfloor\leqslant \frac{n}{4}\leqslant ks+\frac{s+1}{2}$ and $k+\frac{1}{s}\leqslant \frac{1}{s}\lfloor\frac{n}{4}\rfloor\leqslant k+\frac{s+1}{2s}
\leqslant k+1$. This implies that $\lceil \frac{1}{s}\lfloor\frac{n}{4}\rfloor\rceil=k+1$ and so $\underset{i}{\max}\,\, d(u_0,u_i)=2k+2=(u_0,u_{\lfloor\frac{n}{4}\rfloor})$.
Hence, by Proposition \ref{prop214}, we have $d(u_0,v_j)=2k+1$ for some $j$.
We have to prove that $d(u_0,v_j)\leqslant 2k+1$ for $j=0,1,\cdots ,\frac{n}{2}-1$. On the contrary, assume that there exists $j$ such that $d(u_0,v_j)=2k+3$. By Proposition \ref{prop214} we have $\min\, \{d(u_0,u_i):u_i\in N(v_j)\}=  2k+2$ and so
\mbox{$\{d(u_0,u_i): u_i\in N(v_j)\}=$} $\{2k+2\}$, by maximality of $2k+2$. Therefore, $d(u_0,u_j)=d(u_0,u_{j-s})=2k+2$. Now, we claim that $ks+1\leqslant \min\{ j , \frac{n}{2}-j\}$. Otherwise, $\min\{ j , \frac{n}{2}-j \}\leqslant ks$ and by Lemma  \ref{lem29}(i), we have $2k+2=d(u_0,u_j)\leqslant 2(\Delta-2)\leqslant 2k$, a contradiction, or by Lemma \ref{lem29}(ii), we have $2k+2=d(u_0,u_j)=2\lceil \frac{\min\{j,n/2-j\}}{s}\rceil \leqslant 2\lceil \frac{ks}{s}\rceil=2k$, a contradiction.
Since $ks+1\leqslant \min\{ j , \frac{n}{2}-j\}$ we have $ks+1\leqslant j$ and $ks+1\leqslant \frac{n}{2}-j\leqslant 2ks+s+1-j$ or $j\leqslant (k+1)s$. Therefore, we obtain that $ks+1\leqslant j\leqslant (k+1)s$. Now, we have $(\Delta-3)s+1\leqslant (k-1)s+1\leqslant j-s\leqslant ks\leqslant \lfloor\frac{n}{4}\rfloor$ and by Lemma \ref{lem29}(ii), we have $d(u_0,u_{j-s})=2\lceil\frac{j-s}{s}\rceil=2k$, which is a contradiction. Finally, we have $\underset{i}{\max}\,\, d(u_0,v_j)=2k+1$ and so $\diam(W_{\Delta,n})=d(u_0,u_{\lfloor\frac{n}{4}\rfloor})=2k+2$.
 \end{enumerate}

 \vspace*{-9mm}
\end{proof}

Due to the proof of the above lemma, we conclude that:
\begin{corollary}\label{cor32}
 In a Kn\"odel graph $W_{\Delta,n}$, with $\Delta\geqslant 3$ and $n\geqslant (2\Delta-5)(2^{\Delta}-2)+4$, we have
  $\diam(W_{\Delta,n})=\max\,\{d(u_0,u_{\lfloor\frac{n}{4}\rfloor}),d(u_0,v_{\lfloor\frac{n+2s}{4}\rfloor}) \}$.
  \end{corollary}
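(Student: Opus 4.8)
The plan is to reduce the statement directly to Lemma~\ref{diameter}. Since $s=2^{\Delta-1}-1$ we have $2^{\Delta}-2=2s$, so the hypothesis $n\ge(2\Delta-5)(2^{\Delta}-2)+4$ reads $n\ge(4\Delta-10)s+4=(4(\Delta-2)-2)s+4$; in other words the smallest admissible $n$ is the left endpoint of the interval in Lemma~\ref{diameter}(i) corresponding to $k=\Delta-2$.

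Next I would verify that the two families of intervals
\[\bigl[(4k-2)s+4,\ 4ks+2\bigr]\quad\text{and}\quad\bigl[4ks+4,\ (4k+2)s+2\bigr],\qquad k\ge\Delta-2,\]
appearing in Lemma~\ref{diameter} together cover every \emph{even} integer $n$ with $n\ge(4\Delta-10)s+4$. The key observation is that for $\Delta\ge3$ the number $s$ is odd, hence all four endpoints above are even while the apparent gaps $4ks+3$ and $(4k+2)s+3$ are odd, so they are never attained by an even $n$. Thus each even $n\ge(4\Delta-10)s+4$ lies in exactly one of these intervals, for a uniquely determined $k\ge\Delta-2$, and Lemma~\ref{diameter} gives $diam(W_{\Delta,n})=2k+1$ in the first case and $diam(W_{\Delta,n})=2k+2$ in the second.

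It then remains to match the two explicit distances in the statement with these values. By Theorem~\ref{max-dist-i}, $d(u_0,u_{\lfloor n/4\rfloor})=2\lceil\tfrac1s\lfloor n/4\rfloor\rceil=\max_i d(u_0,u_i)$, and the computation of this ceiling done inside the proof of Lemma~\ref{diameter} gives the value $k$ in the first case and $k+1$ in the second; thus $d(u_0,u_{\lfloor n/4\rfloor})$ equals $2k$ resp.\ $2k+2$. For the vertex $v_{\lfloor(n+2s)/4\rfloor}$, in the first case the proof of Lemma~\ref{diameter}(i) (through Lemma~\ref{2k+1,2k}(ii)) shows $d(u_0,v_{\lfloor(n+2s)/4\rfloor})=2k+1$, whereas in the second case it suffices to note $d(u_0,v_{\lfloor(n+2s)/4\rfloor})\le\max_j d(u_0,v_j)=2k+1<2k+2$, the equality $\max_j d(u_0,v_j)=2k+1$ being exactly what the proof of Lemma~\ref{diameter}(ii) establishes. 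Consequently $\max\{d(u_0,u_{\lfloor n/4\rfloor}),d(u_0,v_{\lfloor(n+2s)/4\rfloor})\}$ equals $2k+1$ in the first case and $2k+2$ in the second, i.e.\ it equals $diam(W_{\Delta,n})$.

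Conversely, $diam(W_{\Delta,n})\ge\max\{d(u_0,u_{\lfloor n/4\rfloor}),d(u_0,v_{\lfloor(n+2s)/4\rfloor})\}$ is automatic, since by vertex-transitivity $diam(W_{\Delta,n})=\max_w d(u_0,w)$ over all vertices $w$. This gives the claimed equality. The only step requiring care is the interval bookkeeping of the second paragraph — checking, using the parity of $s$ and of $n$, that no even $n\ge(4\Delta-10)s+4$ escapes the two families of intervals and that each such $n$ forces $k\ge\Delta-2$; the rest is just reading off quantities already computed in the proof of Lemma~\ref{diameter}.
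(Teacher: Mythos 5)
Your proposal is correct and follows essentially the same route as the paper, which offers no written argument beyond the remark that the corollary follows from the proof of Lemma~\ref{diameter}; you have simply made explicit the interval/parity bookkeeping and the identification of $d(u_0,u_{\lfloor n/4\rfloor})$ and $d(u_0,v_{\lfloor(n+2s)/4\rfloor})$ with the quantities computed there. No gaps.
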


  We can now state the main purpose of the article.

\begin{theorem}\label{the33}
 In a Kn\"odel graph $W_{\Delta,n}$, with $\Delta\geqslant 3$ and $n\geqslant (2\Delta-5)(2^{\Delta}-2)+4$, we have
  $\diam(W_{\Delta,n})=1+\lceil\frac{n-2}{2^{\Delta}-2}\rceil$.
  \end{theorem}
\begin{proof}
We consider the even integer $n+2s-4$ and by division algorithm we have
$n+2s-4=4ks+2r$ or $n=(4k-2)s+2r+4$, where $k$ and $r$ are integers and $0\leqslant r\leqslant 2s-1$. From now on, we distinguish the following two cases. \medskip
\\
\textbf{Case 1:}
If $0\leqslant r \leqslant s-1$, then $(4k-2)s+4 \leqslant n \leqslant 4ks+2$, that is, $2k-1+\frac{1}{s}\leqslant \frac{n-2}{2s}\leqslant 2k$ and $\lceil\frac{n-2}{2^{\Delta}-2}\rceil=2k$. Now, by Lemma \ref{lem31}(i) we conclude that $\diam(W_{\Delta,n})=2k+1=1+\lceil\frac{n-2}{2^{\Delta}-2}\rceil$. \medskip
\\
\textbf{Case 2:} If $s\leqslant r \leqslant 2s-1$, then $4ks+4 \leqslant n \leqslant (4k+2)s+2$, that is, $2k+\frac{1}{s}\leqslant \frac{n-2}{2s}\leqslant 2k+1$ and $\lceil\frac{n-2}{2^{\Delta}-2}\rceil=2k+1$. Now, by Lemma \ref{lem31}(ii) we conclude that $\diam(W_{\Delta,n})=2k+2=1+\lceil\frac{n-2}{2^{\Delta}-2}\rceil$. This two cases complete the proof.
\end{proof}

\section{Conclusion}
In this article, we discussed about distance and diameter in Kn\"odel graphs $W_{\Delta,n}$, two important concepts in graph theory and communication networks.
We obtained some exact formulas for diameter and the distance between two vertices of Kn\"odel graph for large enough $n$. For smaller values of $n$, we conjecture the inequalities \[1+\lceil\dfrac{n-2}{2^{\Delta}-2}\rceil\leqslant \diam(W_{\Delta,n})\leqslant \lceil\dfrac{\Delta}{2}\rceil+\lceil\dfrac{n-2}{2^{\Delta}-2}\rceil\]
 that remains to be proved.
\begin{conjecture}
If $\Delta\geqslant2$ and $n\geqslant2^{\Delta}$ be an even integer, then $\diam(W_{\Delta,n)}\leqslant\lceil\dfrac{\Delta}{2}\rceil+\lceil\dfrac{n-2}{2^{\Delta}-2}\rceil$.
\end{conjecture}


\begin{thebibliography}{10}
\providecommand{\url}[1]{\texttt{#1}}
\providecommand{\urlprefix}{URL }
\expandafter\ifx\csname urlstyle\endcsname\relax
  \providecommand{\doi}[1]{doi:\discretionary{}{}{}#1}\else
  \providecommand{\doi}{doi:\discretionary{}{}{}\begingroup
  \urlstyle{rm}\Url}\fi
\providecommand{\eprint}[2][]{\url{#2}}

\bibitem{ah}
Altay SC, Harutyunyan HA.
\newblock New Properties for Broadcasting in $KG_{2^k}$.
\newblock \emph{C3S2E}, 2014:8.
\newblock \doi{10.1145/2641483.2641532}.

\bibitem{b}
Balakrishnan R.
\newblock Some properties of the Kn\"odel graphs $W_{k,2^k}$.
\newblock \emph{Australasian Journal of Combinatorics}, 2019.
\newblock \textbf{74} (1):17--32. ISSN: 1034-4942.


\bibitem{bhlp}
Bermond JC, Harutyunyan HA, Liestman AL and Perennes S.
\newblock  A note on the dimensionality of modified Kn\"odel graphs.
\newblock \emph{IJFCS: Int. J. Foundations Comput. Sci.}, 1997.
\newblock \textbf{8} (2): 109--116.

\bibitem{bm}
Bondy JA, Murty USR.
\newblock Graph theory,  volume 244 of Graduate Texts in Mathematics.
\newblock \emph{Springer, New York}, 2008.
ISBN-10:1846289696, ISBN-13: 9781846289699.

\bibitem{fr}
Fertin G, Raspaud A.
\newblock A survey on Kn\"odel graphs.
\newblock \emph{Discrete Applied Mathematics}, 2004.
\newblock \textbf{137}: 173--195.
  doi:10.1016/S0166-218X(03)00260-9.

\bibitem{frssv}
Fertin G, Raspaud A, Schroder H, Sykora O, Vrt’o I.
\newblock Diameter of the Kn\"odel graph.
\newblock \emph{Graph-Theoretic Concepts in Computer Science,  Springer}, 2000.
\newblock : 149--160.
doi:10.1007/3-540-40064-8\_15.

\bibitem{fl}
Fraigniaud P, Lazard E.
\newblock Methods and problems of communication in usual networks.
\newblock \emph{Discrete. Applied Math.}, 1994.
\newblock \textbf{53}(1-3): 79--133.

\bibitem{fp}
Fraigniaud P, Peters JG..
\newblock Minimum linear gossip graphs and maximal linear $(\Delta,k)$-gossip graphs.
\newblock \emph{Networks}, 2001.
\newblock \textbf{38}: 150--162.

\bibitem{gh}
Grigoryan H, Harutyunyan HA.
\newblock The shortest path problem in the Kn\"odel graph.
\newblock \emph{J. Discrete Algorithms}, 2015.
\newblock \textbf{31}: 40--47. doi:10.1016/j.jda.2014.11.008.

\bibitem{hl}
Harutyunyan HA, Liestman AL.
\newblock Upper bounds on the broadcast function using minimum dominating sets.
\newblock \emph{Discrete Math.}, 2012.
\newblock \textbf{312}(20): 2992--2996.  doi:10.1016/j.disc.2012.06.016.

bibitem{hlpr}
Harutyunyan HA, Liestman AL, Peters J, Richards D.
\newblock Broadcasting and Gossiping.
\newblock \emph{The Handbook of Graph Theory, J. Gross, Yellen, P. Zhang eds. Chapman and Hall/CRC}, 2013
\newblock : 1477-1494.


\bibitem{ho}
Harutyunyan HA, Oad GB.
\newblock Exploring the diameter and broadcast time of general Kn\"odel graphs using extensive simulations.
\newblock \emph{C3S2E}, 2014
\newblock \textbf: 25.  doi:10.1145/2641483.2641531.

\bibitem{hhl}
Hedetniemi S M, Hedetniemi S T, Liestmant L.
\newblock A Survey of Gossiping and Broadcasting in Communication Networks.
\newblock \emph{Networks}, 1988.
\newblock \textbf{18}: 319--349.  doi:10.1002/net.3230180406.

\bibitem{hmp}
Heydemann M-C, Marlin N, Perennes S.
\newblock Cayley graphs with complete rotations.
\newblock \emph{Technical report, Laboratoire de Recherche en Informatique (Orsay)},1997.
\newblock TR-1155, Submitted for publication.

\bibitem{h}
Hovnanyan VH.
\newblock Gossiping Properties of the Modified Kn\"odel graphs.
\newblock \emph{Mathematical Problems of Computer Science}, 2016.
\newblock \textbf{46}: 126--131.  ISSN- 2579-2784,  ISSN- 2738-2788.

\bibitem{hkmp}
Hromkovic J, Klasing R, Monien B, Peine R.
\newblock Dissemination of information in interconnection networks (broadcasting and gossiping).
\newblock \emph{Combinatorial network theory}, 1996
\newblock : 125--212.

\bibitem{k}
Kn\"odel W.
\newblock New gossips and telephones.
\newblock \emph{Discrete Mathematics}, 1975.
\newblock \textbf{13}: 95.

\bibitem{mmn1}
Mojdeh DA, Musawi SR, Nazari E.
\newblock Domination Critical Kn\"odel Graphs.
\newblock \emph{Iran J Sci Technol Trans Sci.}, 2019.
\newblock \textbf{43}: 2423-2428.
\newblock \doi{10.1007/s40995-019-00710-8}.

\bibitem{mmn2}
Mojdeh DA, Musawi SR, Nazari E.
\newblock Domination in 4-regular Kn\"odel graphs.
\newblock \emph{Open Math.}, 2018.
\newblock \textbf{16}: 816-825.  doi:10.1515/math-2018-0072.


\bibitem{mmnj}
Mojdeh DA, Musawi SR, Nazari E, Jafari Rad N.
\newblock Total domination in cubic Kn\"odel Graphs.
\newblock \emph{Communications in Combinatorics and Optimization}, 2021.
\newblock \textbf{6}(2): 221-230.
doi:10.22049/CCO.2020.26793.1143.

\bibitem{o}
Oad GB.
\newblock Diameter and Broadcast Time of the Kn\"odel graph.
\newblock \emph{Master’s thesis, Concordia University}, 2014.
ID: 123262219.

\bibitem{sch}
Slater PJ, Cockayne EJ, Hedetniemi ST.
\newblock Information Dissemination in Trees.
\newblock \emph{Siam J. Comput.}, 1981.
\newblock \textbf{10}(4): 692-701.  doi:10.1137/0210052.

\bibitem{xxyf}
Xueliang F, Xu X, Yuansheng Y, Feng X.
\newblock On The Domination Number of Kn\"odel Graph $W(3,n)$
\newblock \emph{IJPAM}, 2009.
\newblock \textbf{50}(4): 553-558.   ID: 54603756.
\end{thebibliography}

\end{document}